\documentclass[oneside,11pt]{article}

\usepackage[utf8]{inputenc}
\usepackage[T1]{fontenc}

\usepackage{amsthm}
\usepackage{amsmath}
\usepackage{amsfonts}
\usepackage{amssymb}

\usepackage{graphicx}
\usepackage{graphbox}
\usepackage{float}
\usepackage{booktabs}
\usepackage{multirow}
\usepackage{rotating}
\usepackage{array}
\usepackage{xcolor}
\usepackage{subcaption}
\usepackage[ruled]{algorithm2e}

\newcolumntype{C}[1]{>{\centering\arraybackslash}p{#1}}
\newcolumntype{R}[1]{>{\raggedleft\let\newline\\\arraybackslash\hspace{0pt}}m{#1}}
\newcolumntype{L}[1]{>{\raggedright\let\newline\\\arraybackslash\hspace{0pt}}m{#1}}

\usepackage{breakcites}

\usepackage{nowidow}

\usepackage{enumitem}

\usepackage[top=1.5cm,bottom=2cm,right=2.5cm,left=2.5cm]{geometry}
\usepackage{titling}

\usepackage{natbib}

\usepackage{ifplatform}

\usepackage{textcomp}
\usepackage{bbm}

\usepackage{comment}

\ifwindows
\fi

\allowdisplaybreaks

\newcommand{\lb}{\left\{}
\newcommand{\rb}{\right\}}

\newcommand{\R}{\mathbb{R}}

\newcommand{\Sp}{\mathbb{S}}

\newcommand{\rd}{\mathrm{d}}

\newcommand{\bx}{\boldsymbol{x}}

\newcommand{\by}{\boldsymbol{y}}
\newcommand{\bX}{\boldsymbol{X}}
\newcommand{\bY}{\boldsymbol{Y}}

\newcommand{\bU}{\boldsymbol{U}}

\newcommand{\bmu}{\boldsymbol{\mu}}

\newcommand{\bxi}{\boldsymbol{\xi}}

\newcommand{\bphi}{\boldsymbol{\phi}}

\newcommand{\bB}{\boldsymbol{B}}

\newcommand{\bI}{\boldsymbol{I}}

\newcommand{\Prob}[1]{\mathbb{P}\lb #1\rb}

\newcommand{\defin}{:=}

\DeclareFontFamily{OT1}{pzc}{}
\DeclareFontShape{OT1}{pzc}{m}{it}{<-> s * [1.10] pzcmi7t}{}
\DeclareMathAlphabet{\mathpzc}{OT1}{pzc}{m}{it}

\newtheorem{theorem}{Theorem}[section]
\newtheorem{corollary}{Corollary}[section]

\newtheorem{proposition}{Proposition}[section]

\newcommand{\pct}{\%}

\newcommand{\Cau}{\mathrm{C}}
\newcommand{\vMF}{\mathrm{vMF}}
\newcommand{\Pois}{\mathrm{P}}
\newcommand{\Mob}{\mathrm{MvMF}}
\newcommand{\Card}{\mathrm{Card}}

\usepackage[pdftex,final,bookmarksnumbered,bookmarksopen=false,breaklinks,colorlinks]{hyperref}
\hypersetup{
	final,
	bookmarksnumbered=true,
	bookmarksopen=true,
	bookmarksopenlevel=0,
	unicode=false,
	pdftoolbar=true,
	pdfmenubar=true,
	pdffitwindow=false,
	pdftitle={Mobius transport on spheres},
	pdfdisplaydoctitle=true,
	pdftoolbar=true,
	pdfmenubar=true,
	pdflang={English},
	pdfauthor={Eduardo Garcia-Portugues, Shogo Kato},
	pdfsubject={arXiv paper},
	pdfcreator={Eduardo Garcia-Portugues},
	pdfproducer={Eduardo Garcia-Portugues},
	pdfkeywords={Directional statistics}{Mobius transformation}{Optimal transport}{Rotational symmetry}{Spherical Cauchy distribution}{Von Mises-Fisher distribution},
	pdfnewwindow=true,
	breaklinks=true,
	hidelinks,
	linkcolor=black,
	citecolor=black,
	filecolor=magenta,
	urlcolor=cyan
}

\newif\ifmain
\maintrue
\newif\ifsupplement
\supplementfalse

\newif\iffigstabs
\figstabstrue

\begin{document}

\ifmain

\title{M\"obius transport on spheres}
\setlength{\droptitle}{-1cm}
\predate{}%
\postdate{}%
\date{}

\author{Eduardo Garc\'ia-Portugu\'es$^{1,3}$ and Shogo Kato$^{2}$}
\footnotetext[1]{Department of Statistics, Universidad Carlos III de Madrid (Spain).}
\footnotetext[2]{Institute of Statistical Mathematics (Japan).}
\footnotetext[3]{Corresponding author. e-mail: \href{mailto:edgarcia@est-econ.uc3m.es}{edgarcia@est-econ.uc3m.es}.}
\maketitle

\begin{abstract}
The M\"obius transformation that generates the spherical Cauchy distribution from the uniform is the tangent--normal lift of a one-dimensional optimal transport: the monotone rearrangement of the cosine about the location axis. This identifies the probabilistic nature of the M\"obius transformation and suggests a generalization: replacing the Cauchy target by any rotationally symmetric law, for instance the Poisson kernel or spherical cardioid, yields a generalized M\"obius transformation. M\"obius transport of a von Mises--Fisher base gives tractable anisotropic distributions on the sphere, with closed-form densities that inherit the base normalizing constant and allow immediate simulation. The M\"obius--von Mises--Fisher and isotropic scaled von Mises--Fisher distributions, the latter also arising from a M\"obius transport, are illustrated on paleomagnetic directions and short-period comet orbits, where they outperform classical and recently proposed alternatives.
\end{abstract}
\begin{flushleft}
	\small\textbf{Keywords:} Directional statistics; M\"obius transformation; Optimal transport; Rotational symmetry; Spherical Cauchy distribution; Von Mises--Fisher distribution.
\end{flushleft}

\section{Introduction}

M\"obius transformations are a recurring device for building tractable models in directional statistics. On the circle, the wrapped Cauchy distribution arises from the uniform distribution through a M\"obius transformation, a fact underlying the wrapped Cauchy family and its inferential properties \citep{McCullagh1996,Kato2010a}, and M\"obius circular regression \citep{Downs2002}. On the torus, applying M\"obius transformations to a circular copula yields the bivariate wrapped Cauchy distribution \citep{Kato2015a}; the same device yields the M\"obius distribution on the disc \citep{Jones2004} and the spherical Cauchy distribution \citep{Kato2020}. The appeal is the same throughout: closed-form densities, exact simulation, and tractable likelihoods.

Optimal transport has recently entered directional statistics, notably the measure-transportation ranks and signs of \citet{Hallin2024}. Ours is a different, explicit use: the map generating the spherical Cauchy law from the uniform is the lift of a one-dimensional optimal transport (Section~\ref{sec:ot}). It rearranges only the cosine about the location axis and fixes the tangent direction, so mass moves along meridians in closed form.

The same lift depends on the target only through the distribution function of its cosine. Replacing the spherical Cauchy by any rotationally symmetric distribution therefore defines a \emph{generalized} M\"obius transformation, with the Cauchy case as the only conformal map in the family (Section~\ref{sec:transport}). Transport through it inherits the base normalizing constant, so a von Mises--Fisher base gives tractable anisotropic models (Section~\ref{sec:tvmf}). Such models are scarce: the standard anisotropic distribution of \citet{Kent1982} has a normalizing constant given by an infinite series. The M\"obius--von Mises--Fisher and isotropic scaled von Mises--Fisher distributions avoid this. We illustrate the usefulness of these transported von Mises--Fisher distributions on paleomagnetic directions (Section~\ref{sec:paleo}) and short-period comet orbits (Section~\ref{sec:comets}).

\section{The M\"obius transformation as optimal transport}
\label{sec:ot}

Let $\Sp^d\defin \{\bx\in\R^{d+1}:\|\bx\|=1\}$ be the $d$-dimensional hypersphere, $d\geq 1$, with surface area $\omega_d = 2\pi^{(d+1)/2}/\Gamma\{(d+1)/2\}$. The spherical Cauchy distribution of \citet{Kato2020}, denoted $\Cau(\bmu,\rho)$, has density
\begin{align*}
    f_{\Cau}(\bx;\bmu,\rho)\defin\frac{1}{\omega_d}\left(\frac{1-\rho^2}{1+\rho^2-2\rho \bx^\top\bmu}\right)^d,
\end{align*}
with respect to the surface measure $\sigma_d$ on $\Sp^d$, for a location $\bmu\in\Sp^d$ and concentration $\rho\in(-1,1)$; here $\rho=0$ gives the uniform distribution $\mathrm{Unif}(\Sp^d)$ and $\rho<0$ gives $\Cau(-\bmu,|\rho|)$. For $\bX\sim\Cau(\bmu,\rho)$, the projection $\bX^\top\bmu$ has the cumulative distribution function (cdf) given in the following result.

\begin{proposition}\label{prop:cdf}
Let $s\in[-1,1]$ and
\begin{align*}
M_\rho(s)\defin\frac{(1+\rho^2)s-2\rho}{1+\rho^2-2\rho s},\qquad \rho\in(-1,1),
\end{align*}
which is an increasing bijection of $[-1,1]$ with inverse $M_\rho^{-1}=M_{-\rho}$. Then, for $x \in [-1,1]$,
\begin{align}
  F_{\rho}(x) \defin \Prob{\bX^\top\bmu\leq x} = \mathrm{I}_{(1+M_\rho(x))/2}(d/2, d/2),\label{eq:cdf}
\end{align}
where $\mathrm{I}_z(a,b) \defin \mathrm{B}(z;a,b)/\mathrm{B}(a,b)$ is the regularized incomplete beta function.
\end{proposition}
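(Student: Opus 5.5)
We prove the monotonicity claims about $M_\rho$ directly, and the distributional claim by a change of variables in the density of $T\defin\bX^\top\bmu$.

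\emph{Properties of $M_\rho$.} For $\rho\in(-1,1)$ and $s\in[-1,1]$ we have $1+\rho^2-2\rho s\geq(1-|\rho|)^2>0$, so $M_\rho$ is smooth on $[-1,1]$. Differentiating the quotient gives
\begin{align*}
M_\rho'(s)=\frac{(1+\rho^2)^2-4\rho^2}{(1+\rho^2-2\rho s)^2}=\frac{(1-\rho^2)^2}{(1+\rho^2-2\rho s)^2}>0 .
\end{align*}
Direct evaluation gives $M_\rho(\pm1)=\pm1$. Hence $M_\rho$ is an increasing bijection of $[-1,1]$. To identify the inverse, solve $y=M_\rho(s)$ for $s$; this is linear in $s$ and gives $s=\{(1+\rho^2)y+2\rho\}/(1+\rho^2+2\rho y)=M_{-\rho}(y)$.

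\emph{Density of the projection.} For a density on $\Sp^d$ that depends on $\bx$ only through $t=\bx^\top\bmu$, the standard tangent--normal decomposition $\rd\sigma_d(\bx)=(1-t^2)^{d/2-1}\,\rd t\,\rd\sigma_{d-1}(\bxi)$ gives the density of $T$:
\begin{align*}
f_T(t)=\frac{\omega_{d-1}}{\omega_d}(1-t^2)^{d/2-1}\left(\frac{1-\rho^2}{1+\rho^2-2\rho t}\right)^d,\qquad t\in[-1,1].
\end{align*}
For $d=1$ we use the convention $\omega_0=2$, which matches the two tangent directions $\pm$.

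\emph{Change of variables.} Set $u=M_\rho(t)$. The key identity is
\begin{align*}
1-u^2=\frac{(1-\rho^2)^2(1-t^2)}{(1+\rho^2-2\rho t)^2}.
\end{align*}
To check it, compute $(1+\rho^2-2\rho t)^2-\{(1+\rho^2)t-2\rho\}^2$. This difference factors as $(1-t^2)\{(1+\rho^2)^2-4\rho^2\}=(1-t^2)(1-\rho^2)^2$. Combining the identity with the formula for $M_\rho'$, we can write
\begin{align*}
(1-t^2)^{d/2-1}\left(\frac{1-\rho^2}{1+\rho^2-2\rho t}\right)^d
&=\left[\frac{(1-t^2)(1-\rho^2)^2}{(1+\rho^2-2\rho t)^2}\right]^{d/2-1}\frac{(1-\rho^2)^2}{(1+\rho^2-2\rho t)^2}\\
&=(1-u^2)^{d/2-1}M_\rho'(t).
\end{align*}
Therefore $F_\rho(x)=\int_{-1}^{M_\rho(x)}\frac{\omega_{d-1}}{\omega_d}(1-u^2)^{d/2-1}\,\rd u$. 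This is the cdf of $T$ under the uniform law, evaluated at $M_\rho(x)$; equivalently, $M_\rho(T)$ has the uniform projected law.

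\emph{Identification with the beta function.} Substitute $z=(1+u)/2$. Then $1-u^2=4z(1-z)$ and $\rd u=2\,\rd z$, so the integrand becomes a constant multiple of $z^{d/2-1}(1-z)^{d/2-1}$. The constant must equal $1/\mathrm{B}(d/2,d/2)$ because the uniform density integrates to one. One can confirm this directly from $\omega_{d-1}/\omega_d=\Gamma\{(d+1)/2\}/\{\sqrt{\pi}\,\Gamma(d/2)\}$ and the duplication formula. This yields $F_\rho(x)=\mathrm{I}_{(1+M_\rho(x))/2}(d/2,d/2)$.

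The only step needing care is the algebraic identity for $1-u^2$, since the whole argument hinges on it. It reduces to the difference-of-squares factorisation shown above, so no real obstacle remains.
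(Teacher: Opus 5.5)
Your proof is correct and takes essentially the same route as the paper: the tangent--normal factorization of the cosine density, then the substitution $u=M_\rho(t)$ with the identity $1-u^2=(1-t^2)M_\rho'(t)$, giving $F_\rho=F_0\circ M_\rho$, then identification of $F_0$ with the regularized incomplete beta function. You also supply the algebra the paper leaves as direct computation: the derivative, the difference-of-squares factorization, and the normalizing constant checked via the duplication formula.
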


For $\rho=0$, $M_0=\mathrm{id}$, so $F_{0}(x)=\mathrm{I}_{(1+x)/2}(d/2,d/2)$ is the projected uniform cdf; in particular $F_0(x)=(x+1)/2$ for $d=2$. An important consequence is that \eqref{eq:cdf} is expressible as $F_{\rho}=F_0\circ M_\rho$.

Let $\nu_\rho$ denote the law of $\bX^\top\bmu$ under $\Cau(\bmu,\rho)$; $\nu_0$ is then the projected uniform law.

\begin{proposition}\label{prop:ot}
The map $M_\rho$ equals the monotone rearrangement
\begin{align}
  M_\rho = F_0^{-1}\circ F_\rho,\label{eq:rearrangement}
\end{align}
which pushes $\nu_\rho$ forward to $\nu_0$, hence is the associated optimal transport map for every cost $c(s,t)=h(s-t)$ with $h$ strictly convex (Brenier's map if $h(s)=s^2$). Its inverse $M_{-\rho}=F_\rho^{-1}\circ F_0$ transports $\nu_0$~to~$\nu_\rho$.
\end{proposition}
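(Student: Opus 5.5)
The plan is to read everything off the identity $F_\rho=F_0\circ M_\rho$ from Proposition~\ref{prop:cdf}, and then appeal to the standard one-dimensional optimal transport theory.

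First, I will show that $F_0$ is a continuous, strictly increasing bijection of $[-1,1]$ onto $[0,1]$. It is the regularized incomplete beta function composed with the affine map $x\mapsto(1+x)/2$. Its density is proportional to $(1-x^2)^{d/2-1}$, which is positive on $(-1,1)$. Hence $F_0^{-1}$ is a genuine inverse and not just a quantile function.

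Composing $F_\rho=F_0\circ M_\rho$ on the left with $F_0^{-1}$ gives $F_0^{-1}\circ F_\rho=M_\rho$, which is \eqref{eq:rearrangement}. In the same way, since $M_\rho^{-1}=M_{-\rho}$ by Proposition~\ref{prop:cdf}, we have $F_\rho^{-1}=M_\rho^{-1}\circ F_0^{-1}=M_{-\rho}\circ F_0^{-1}$. Therefore $F_\rho^{-1}\circ F_0=M_{-\rho}$. Here $F_\rho$ is also a continuous bijection, being a composition of two such maps.

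For the pushforward claim, let $S\sim\nu_\rho$. Because $F_\rho$ is continuous and strictly increasing, $F_\rho(S)$ is uniform on $[0,1]$ by the probability integral transform. Consequently $F_0^{-1}(F_\rho(S))$ has cdf $F_0$, that is, law $\nu_0$. The symmetric argument, with the roles of $F_0$ and $F_\rho$ exchanged, shows that $M_{-\rho}$ pushes $\nu_0$ to $\nu_\rho$. One could also check this directly: $\Prob{M_\rho(S)\le t}=F_\rho(M_{-\rho}(t))=F_0(t)$, using monotonicity of $M_\rho$.

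For optimality, I will invoke the classical result on the real line. When the source measure is atomless, the monotone (quantile) rearrangement $F_0^{-1}\circ F_\rho$ is an optimal transport map for any cost $c(s,t)=h(s-t)$ with $h$ convex. When $h$ is strictly convex, it is the unique optimal map and the induced coupling is the unique optimal plan. Citable references are Villani (2003, Thm.~2.18) and Santambrogio (2015, Thm.~2.9 and Prop.~2.17). The cost is finite because both measures live on $[-1,1]$ and $h$ is continuous, hence bounded on $[-2,2]$.

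In the case $h(s)=s^2$, $M_\rho$ is increasing and therefore the derivative of a convex function. Brenier's theorem then identifies it as the Brenier map.

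The only step needing care is the appeal to the 1D optimality theorem. Its hypotheses (an atomless source, compact support, and strict convexity for uniqueness) must be checked, and the source $\nu_\rho$ has a density, so it is atomless. If a self-contained argument were preferred, I would use cyclical monotonicity. For strictly convex $h$, $c$-cyclically monotone plans on $\R$ are exactly the monotone couplings. The comonotone coupling is the unique monotone coupling with the given marginals, and it is supported on the graph of $M_\rho$.
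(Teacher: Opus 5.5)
Your proposal is correct and follows essentially the same route as the paper: compose $F_\rho=F_0\circ M_\rho$ with $F_0^{-1}$, invoke the one-dimensional monotone-rearrangement theorem (Santambrogio's Theorem~2.9, with Brenier for $h(s)=s^2$), and treat the inverse via $M_\rho^{-1}=M_{-\rho}$. Your extra checks are details the paper leaves implicit: the pushforward via the probability integral transform, the finiteness of the cost on $[-1,1]$, and the convex-antiderivative identification of the Brenier map.
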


This transport lifts to $\Sp^d$ through the tangent--normal decomposition. Write $\bx = t\bmu + (1-t^2)^{1/2}\,\bB_{\bmu}\bxi$ with $t = \bx^\top\bmu$, $\bxi \in \Sp^{d-1}$, and $\bB_{\bmu}$ a $(d+1)\times d$ semi-orthogonal matrix satisfying $\bB_{\bmu}\bB_{\bmu}^\top = \bI_{d+1} - \bmu\bmu^\top$ and $\bB_{\bmu}^\top\bB_{\bmu} = \bI_d$. Lifting the generating map $M_{-\rho}=F_\rho^{-1}\circ F_0$ defines $\Phi_\rho:\Sp^d\to\Sp^d$,
\begin{align*}
  \Phi_\rho\big(t\bmu + (1-t^2)^{1/2}\,\bB_{\bmu}\bxi\big) \defin (F_\rho^{-1}\circ F_0)(t)\,\bmu + \{1-(F_\rho^{-1}\circ F_0)(t)^2\}^{1/2}\,\bB_{\bmu}\bxi,
\end{align*}
which moves only the cosine and fixes the tangent direction $\bxi$, with $\Phi_\rho(\pm\bmu)\defin\pm\bmu$.

As shown next, $\Phi_\rho$ is the M\"obius transformation on the sphere of \citet{Kato2020}.

\begin{corollary}\label{cor:lift}
If $\bU\sim\mathrm{Unif}(\Sp^d)$, then $\Phi_\rho(\bU)\sim\Cau(\bmu,\rho)$. Moreover, in stereographic coordinates from $-\bmu$, $\Phi_\rho$ is the dilation $\by\mapsto \{(1-\rho)/(1+\rho)\}\by$, hence the M\"obius transformation.
\end{corollary}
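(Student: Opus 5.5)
The plan has two parts: first the distributional claim, then the stereographic identification.

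\emph{Distributional claim.} Write $\bU = T\bmu + (1-T^2)^{1/2}\bB_{\bmu}\bxi$. For $\bU$ uniform, the tangent--normal decomposition gives two facts: $T=\bU^\top\bmu\sim\nu_0$, and $\bxi$ is uniform on $\Sp^{d-1}$ and independent of $T$. By Proposition~\ref{prop:ot}, $M_{-\rho}=F_\rho^{-1}\circ F_0$ pushes $\nu_0$ to $\nu_\rho$, so $T'\defin M_{-\rho}(T)\sim\nu_\rho$ and $T'$ is still independent of $\bxi$. Now $\bX\sim\Cau(\bmu,\rho)$ has a density depending on $\bx$ only through $\bx^\top\bmu$, so it is rotationally symmetric about $\bmu$. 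Hence its tangent--normal decomposition also has $\bxi$ uniform and independent of the cosine, and its cosine has law $\nu_\rho$. A rotationally symmetric law is determined by the law of its cosine, because the joint law of $(t,\bxi)$ determines the law of $\bx$. Since $\Phi_\rho(\bU)=T'\bmu+(1-T'^2)^{1/2}\bB_{\bmu}\bxi$, it follows that $\Phi_\rho(\bU)\sim\Cau(\bmu,\rho)$. The poles $\pm\bmu$ have probability zero and can be ignored.

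\emph{Stereographic identification.} Use the projection from $-\bmu$, namely $\by = \bB_{\bmu}^\top\bx/(1+\bx^\top\bmu)\in\R^d$. In tangent--normal coordinates this is $\by = \{(1-t^2)^{1/2}/(1+t)\}\bxi = \{(1-t)/(1+t)\}^{1/2}\bxi$. The map $\Phi_\rho$ fixes $\bxi$, so it acts radially in these coordinates. The radius changes from $r=\{(1-t)/(1+t)\}^{1/2}$ to $r'=\{(1-s)/(1+s)\}^{1/2}$ with $s=M_{-\rho}(t)=\{(1+\rho^2)t+2\rho\}/(1+\rho^2+2\rho t)$. A short computation gives
\[
1-s=\frac{(1-\rho)^2(1-t)}{1+\rho^2+2\rho t},\qquad 1+s=\frac{(1+\rho)^2(1+t)}{1+\rho^2+2\rho t},
\]
so $(1-s)/(1+s)=\{(1-\rho)/(1+\rho)\}^2(1-t)/(1+t)$. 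Therefore $r'=\{(1-\rho)/(1+\rho)\}r$; the factor is positive because $|\rho|<1$. The map is thus the dilation $\by\mapsto\{(1-\rho)/(1+\rho)\}\by$. The pole $\bmu$ goes to $\by=\boldsymbol{0}$ and $-\bmu$ goes to $\infty$, both fixed, which is consistent with the convention $\Phi_\rho(\pm\bmu)=\pm\bmu$.

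\emph{Conclusion.} Stereographic projection is conformal and a dilation is a M\"obius transformation of $\R^d\cup\{\infty\}$. Hence $\Phi_\rho$ is a M\"obius transformation of $\Sp^d$. To match the specific map of \citet{Kato2020}, I would note that their transformation also has the form of a dilation fixing $\pm\bmu$ in these coordinates, or simply observe that both maps fix $\bxi$ and act on the cosine by $M_{-\rho}$.

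The only step needing care is this identification with Kato and McCullagh's explicit formula; I would confirm it by direct substitution of their formula into $\bx^\top\bmu$. The algebra for $1\pm s$ is routine.
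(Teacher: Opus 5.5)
Your proposal is correct and follows the paper's proof essentially step for step. For the first claim, both arguments factorize rotationally symmetric laws into the cosine and an independent uniform tangent direction; for the second, both use the stereographic radius identity $(1-t')/(1+t')=\{(1-\rho)/(1+\rho)\}^2(1-t)/(1+t)$. Your additional check against the explicit map of \citet{Kato2020} goes through, and the paper leaves it implicit: with $\bphi=\rho\bmu$, the $\bmu$-component of $M_{\bphi}(\bx)$ equals $M_{-\rho}(t)$, and the tangent coefficient $(1-\rho^2)(1-t^2)^{1/2}/(1+\rho^2+2\rho t)$ is positive, so $M_{\bphi}=\Phi_\rho$.
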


\section{Generalizing the M\"obius transformation}
\label{sec:transport}

The lift $\Phi_\rho$ depends on the target only through its cosine cdf $F_\rho$; replacing $F_\rho$ therefore generalizes it. Let $G$ be rotationally symmetric about $\bmu\in\Sp^d$ with continuous, strictly increasing projected cdf $F_G$, and define the \emph{generalized M\"obius transformation} $\mathcal{M}_{G}:\Sp^d\to\Sp^d$ by
\begin{align*}
  \mathcal{M}_{G}\big(t\bmu+(1-t^2)^{1/2}\,\bB_{\bmu}\bxi\big)\defin
  (F_G^{-1}\circ F_0)(t)\,\bmu+[1-\{(F_G^{-1}\circ F_0)(t)\}^2]^{1/2}\,\bB_{\bmu}\bxi.
\end{align*}
Like $\Phi_\rho$, it rearranges the cosine by $F_G^{-1}\circ F_0$ and fixes the tangent direction; by construction $\mathcal{M}_{G}(\bU)\sim G$ for $\bU\sim\mathrm{Unif}(\Sp^d)$, so it is the lift of the one-dimensional optimal transport carrying the uniform to $G$; we call this construction \emph{M\"obius transport}.

The classical M\"obius transformation is the particular case with spherical Cauchy target: $\mathcal{M}_{\Cau(\bmu,\rho)}=\Phi_\rho$, which for $\bphi=\rho\bmu$ in the open unit ball $\mathbb{B}^{d+1}\defin\{\bphi\in\R^{d+1}:\|\bphi\|<1\}$ is the conformal map
\begin{align*}
  M_{\bphi}(\bx)\defin\frac{1-\|\bphi\|^2}{\|\bx+\bphi\|^2}\,(\bx+\bphi)+\bphi,\qquad M_{\bphi}^{-1}=M_{-\bphi},
\end{align*}
of \citet{Kato2020}. For $d\geq2$, the spherical Cauchy target is the only one whose transformation $\mathcal{M}_G$ is conformal. Angle preservation forces equal stretching along and across meridians; for the cosine rearrangement $\psi=F_G^{-1}\circ F_0$, this requirement reads $\psi'(t)=\{1-\psi(t)^2\}/(1-t^2)$, whose only increasing solutions are the M\"obius maps $\psi=M_{-\rho}$, $\rho\in(-1,1)$. On $\Sp^2$ the inverse rearrangement is elementary, $F_0^{-1}(u)=2u-1$, so $\mathcal{M}_G^{-1}$ acts on the cosine $s=\by^\top\bmu$ by $s\mapsto 2F_G(s)-1$.

Transporting a base distribution through $\mathcal{M}_G$ generates new distributions with explicit densities.

\begin{theorem}\label{thm:transport}
Let $\bX$ have density $f$ on $\Sp^d$ and let $\bY=\mathcal{M}_G(\bX)$, where the target $G$ is rotationally symmetric about $\bmu\in\Sp^d$ with density $g_G(\by)=h_G(\by^\top\bmu)/\omega_d$ and continuous, strictly increasing projected cdf. Then $\bY$ has density
\begin{align}
  f_{\bY}(\by)=f\big(\mathcal{M}_G^{-1}(\by)\big)\,h_G(\by^\top\bmu).\label{eq:pushforward}
\end{align}
\end{theorem}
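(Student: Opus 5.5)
The plan is a change of variables, using the observation that $\mathcal{M}_G$ has Jacobian determinant $h_G(\by^\top\bmu)$ when it is read through its action on the uniform distribution. The key fact is already built in: $\mathcal{M}_G(\bU)\sim G$ for $\bU\sim\mathrm{Unif}(\Sp^d)$. So $\mathcal{M}_G$ pushes the measure $\sigma_d/\omega_d$ forward to $g_G\,\rd\sigma_d$. Equivalently, for any bounded measurable $\varphi$,
\begin{align*}
\int_{\Sp^d}\varphi\big(\mathcal{M}_G(\bx)\big)\,\sigma_d(\rd\bx)=\int_{\Sp^d}\varphi(\by)\,h_G(\by^\top\bmu)\,\sigma_d(\rd\by).
\end{align*}
In words, $\mathcal{M}_G$ transforms surface measure into $h_G(\by^\top\bmu)\,\sigma_d(\rd\by)$.

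First I check that $\mathcal{M}_G$ is a bijection of $\Sp^d$ (away from $\pm\bmu$, which are null sets), with inverse given by the lift of $F_0^{-1}\circ F_G$. This holds because $\psi=F_G^{-1}\circ F_0$ is a continuous strictly increasing bijection of $[-1,1]$ and the tangent direction $\bxi$ is untouched. Measurability is clear.

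Next I verify the pushforward identity directly in tangent--normal coordinates, which makes the argument self-contained. We have $\sigma_d(\rd\bx)=(1-t^2)^{d/2-1}\,\rd t\,\sigma_{d-1}(\rd\bxi)$, and $F_0'(t)=(1-t^2)^{d/2-1}\omega_{d-1}/\omega_d$. The cosine of $G$ has density $h_G(s)(1-s^2)^{d/2-1}\omega_{d-1}/\omega_d$, and this equals $F_G'(s)$. Substituting $s=\psi(t)$ with fixed $\bxi$ gives
\begin{align*}
(1-t^2)^{d/2-1}\rd t=\frac{\omega_d}{\omega_{d-1}}F_0'(t)\,\rd t=\frac{\omega_d}{\omega_{d-1}}F_G'(s)\,\rd s=h_G(s)(1-s^2)^{d/2-1}\rd s,
\end{align*}
which is exactly the identity above.

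Then I apply the identity with $\varphi(\by)=\phi(\by)\,f(\mathcal{M}_G^{-1}(\by))$ for an arbitrary test function $\phi$. This gives
\begin{align*}
\mathbb{E}\,\phi(\bY)=\int\phi(\mathcal{M}_G(\bx))f(\bx)\,\sigma_d(\rd\bx)=\int\phi(\by)f(\mathcal{M}_G^{-1}(\by))h_G(\by^\top\bmu)\,\sigma_d(\rd\by),
\end{align*}
which is \eqref{eq:pushforward}.

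The main obstacle is regularity. $F_G$ is only assumed continuous and strictly increasing, but since $G$ has a density, $F_G$ is absolutely continuous. The one-dimensional substitution $s=\psi(t)$ is therefore justified for monotone absolutely continuous maps, provided $\psi$ itself is absolutely continuous. That holds because $\psi^{-1}=F_0^{-1}\circ F_G$ is absolutely continuous and strictly increasing, and $h_G>0$ almost everywhere on the support. If $h_G$ vanishes on an interval, strict monotonicity of $F_G$ rules this out. The exceptional points $\pm\bmu$ have measure zero and do not affect the densities.
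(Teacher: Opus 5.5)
Your argument is correct and is essentially the paper's. The paper likewise combines $\mathcal{M}_{G\#}P_0=G$ with bijectivity to turn $\rd P_{\bX}/\rd P_0=\omega_d f$ into $\rd P_{\bY}/\rd G=\omega_d\, f\circ\mathcal{M}_G^{-1}$, and then multiplies by $\rd G/\rd\sigma_d=h_G(\by^\top\bmu)/\omega_d$. That is your test-function computation in Radon--Nikodym form. Your tangent--normal check only unpacks the ``by construction'' pushforward that the paper takes as given.

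Your closing regularity paragraph, however, asserts something false, and you never need it. Strict monotonicity of $F_G$ does not force $h_G>0$ almost everywhere: a cosine density vanishing on a fat Cantor set still has strictly increasing $F_G$. So $\psi$ need not be absolutely continuous. The substitution $F_0'(t)\,\rd t=F_G'(s)\,\rd s$ is just the statement $\psi_{\#}\nu_0=\nu_G$, where $\nu_G$ is the cosine law of $G$. This follows from $F_G\circ\psi=F_0$ and monotonicity of $\psi$. Equivalently, substitute $u=F_0(t)=F_G(s)$, which uses only absolute continuity of $F_0$ and $F_G$.

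Also state the pushforward identity for nonnegative measurable $\varphi$ rather than bounded ones, since $\phi\,(f\circ\mathcal{M}_G^{-1})$ need not be bounded.
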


A non-uniform base with location axis $\bmu_1$ acquires a second axis through the warped cosine $\bmu_1^\top\mathcal{M}_G^{-1}(\by)$, rendering the transport anisotropic and increasingly skewed as the target concentrates (Figure~\ref{fig:shapes}).

Compositions with different axes generate further anisotropy. If both targets are spherical Cauchy, $\mathcal{M}_{G_2}\circ\mathcal{M}_{G_1}$ is a rotation composed with some $M_{\bphi}$, so $\mathcal{M}_{G_2}\{\mathcal{M}_{G_1}(\bU)\}$ remains spherical Cauchy \citep{Kato2020}. For general targets $G_1$ and $G_2$ about $\bmu_1$ and $\bmu_2$, $\mathcal{M}_{G_2}\{\mathcal{M}_{G_1}(\bU)\}$ is distributed as $\mathcal{M}_{G_2}(\bX)$ with $\bX\sim G_1$, and its density \eqref{eq:pushforward} is skewed or bimodal when $\bmu_1\neq\pm\bmu_2$; the distributions of Section~\ref{sec:tvmf} are exactly of this form, a von Mises--Fisher base being itself a M\"obius transport of the uniform. Iterating \eqref{eq:pushforward} for $\mathcal{M}_{G_p}\circ\cdots\circ\mathcal{M}_{G_1}(\bU)$ gives explicit multi-axis densities, not pursued here.

\begin{figure}[h!]
\centering
\begin{subfigure}[t]{0.31\textwidth}
\centering
\includegraphics[width=\linewidth]{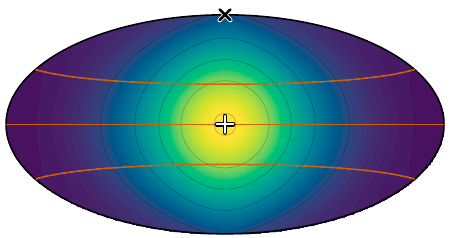}
\caption{$\lambda=0$.}
\end{subfigure}\hfill
\begin{subfigure}[t]{0.31\textwidth}
\centering
\includegraphics[width=\linewidth]{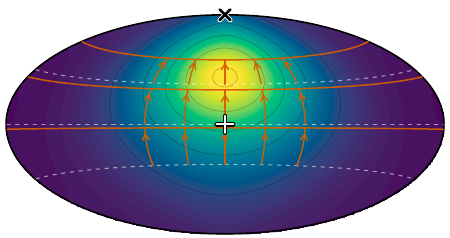}
\caption{$\lambda=1$.}
\end{subfigure}\hfill
\begin{subfigure}[t]{0.31\textwidth}
\centering
\includegraphics[width=\linewidth]{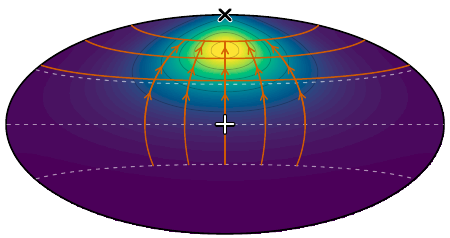}
\caption{$\lambda=3$.}
\end{subfigure}
\caption{\small Densities \eqref{eq:genvmf} on $\Sp^2$ generated from a $\vMF(\bmu_1,3)$ base (see Section~\ref{sec:mvmf}) by the generalized M\"obius transformation with target $G=\vMF(\bmu_2,\lambda)$. Hammer projections are centred on $\bmu_1$ ($+$). White dashed curves are source cosine levels about $\bmu_2$ ($\times$); orange curves and arrows show their transported images. Each panel has its own square-root colour scale.}
\label{fig:shapes}
\end{figure}

A dual mechanism for spherical asymmetry leaves the distribution of the cosine $\by^\top\bmu$ unchanged and instead perturbs the tangent sign vector: the skew-rotationally-symmetric distributions of \citet{Ley2017b} multiply a rotationally symmetric base by a skewing function of the tangent projection, preserving the base normalizing constant, as does M\"obius transport. The two mechanisms are dual and coincide only under rotational symmetry. The \emph{skew von Mises--Fisher} model instance, a von Mises--Fisher base with Gaussian skewing, is considered as a further competitor in Section~\ref{sec:paleo}.

\subsection{Some closed-form transformations}
\label{sec:closedform}

In addition to the spherical Cauchy, two further rotationally symmetric targets have projected cdfs in closed form and therefore explicit mass rearrangements. The \emph{Poisson kernel distribution} \citep[see][]{Golzy2020,McCullagh1989} has density
\begin{align*}
  f_{\Pois}(\bx;\bmu,r)=\frac{1}{\omega_d}\,\frac{1-r^2}{(1+r^2-2r\,\bx^\top\bmu)^{(d+1)/2}},\qquad r\in[0,1),
\end{align*}
the Poisson kernel of the ball $\mathbb{B}^{d+1}$ at $r\bmu$; it is the wrapped Cauchy at $d=1$ but, having exponent $(d+1)/2$ rather than $d$, is distinct from the spherical Cauchy for $d\geq2$. Its projection is the univariate family of \citet{McCullagh1989}. The \emph{spherical cardioid of order $k$} \citep{Garcia-Portugues:cardioid} has density
\begin{align*}
  f_{\Card_k}(\bx;\bmu,\gamma)=\frac{1}{\omega_d}\big\{1+\gamma\,\tilde{C}_k^{(d-1)/2}(\bx^\top\bmu)\big\},
  \qquad |\gamma|\leq 1,\ k\geq1,
\end{align*}
with $\tilde{C}_k^{\alpha}=C_k^{\alpha}/C_k^{\alpha}(1)$ the normalized Gegenbauer polynomial for $\alpha>0$ and $\tilde C_k^0=T_k$ the Chebyshev polynomial. For $d=1$ and $k=1$, it reduces to the classical circular cardioid. Their cosine rearrangements are determined by the projected cdfs and quantile functions below.

\begin{proposition}\label{prop:proj}
For $d=2$, the Poisson kernel and order-$k$ cardioid projections have the explicit~cdfs
\begin{align}
  F_{\Pois}(x) &= \frac{1-r^2}{2r}\bigg\{\frac{1}{(1+r^2-2rx)^{1/2}}-\frac{1}{1+r}\bigg\},\qquad 0<r<1,\label{eq:projP}\\
  F_{\Card_k}(x) &= \frac{x+1}{2}+\frac{\gamma}{2(2k+1)}\big\{P_{k+1}(x)-P_{k-1}(x)\big\},\label{eq:projcard}
\end{align}
where the $r=0$ limit of \eqref{eq:projP} is $(x+1)/2$ and
$P_k=\tilde C_k^{1/2}$ denotes the Legendre polynomial. In particular, $F_{\Card_1}(x)=(x+1)/2-\gamma(1-x^2)/4$ and $F_{\Card_2}(x)=\{\gamma x^3+(2-\gamma)x+2\}/4$.
\end{proposition}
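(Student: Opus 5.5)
The plan is to integrate the projected densities directly, writing $t=\bx^\top\bmu$ and using the tangent--normal decomposition. For a rotationally symmetric density $h(t)/\omega_d$ on $\Sp^2$, the surface measure factors as $\rd\sigma_2=\rd t\,\rd\sigma_1(\bxi)$, since the Jacobian $(1-t^2)^{(d-2)/2}$ is identically $1$ when $d=2$. Integrating out $\bxi$ contributes $\omega_1=2\pi$, and $\omega_2=4\pi$, so the cosine has density $h(t)/2$ on $[-1,1]$ and $F(x)=\tfrac12\int_{-1}^x h(t)\,\rd t$. This reduces both claims to elementary one-dimensional integrals. As a check, it recovers $F_0(x)=(x+1)/2$ stated after Proposition~\ref{prop:cdf}.

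\emph{Poisson kernel.} Here $h(t)=(1-r^2)(1+r^2-2rt)^{-3/2}$. An antiderivative in $t$ of $(1+r^2-2rt)^{-3/2}$ is $\tfrac1r(1+r^2-2rt)^{-1/2}$. At $t=-1$ this evaluates to $\tfrac1r(1+r)^{-1}$, because $1+r^2+2r=(1+r)^2$. This gives \eqref{eq:projP} directly. For the $r\to0$ limit, I will expand $(1+r^2-2rx)^{-1/2}=1+rx+O(r^2)$ and $(1+r)^{-1}=1-r+O(r^2)$. The braces then equal $r(x+1)+O(r^2)$, so the limit is $(x+1)/2$. I will also note that $F_{\Pois}(1)=\tfrac{1-r^2}{2r}\{\tfrac{1}{1-r}-\tfrac1{1+r}\}=1$ as a sanity check.

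\emph{Cardioid.} For $d=2$, $\alpha=1/2$ and $\tilde C_k^{1/2}=C_k^{1/2}=P_k$, since $P_k(1)=1$. So $h(t)=1+\gamma P_k(t)$, and $F(x)=(x+1)/2+\tfrac{\gamma}{2}\int_{-1}^xP_k(t)\,\rd t$. The key tool is the Legendre identity $(2k+1)P_k=P_{k+1}'-P_{k-1}'$ for $k\ge1$. It gives $\int_{-1}^xP_k=\{P_{k+1}(x)-P_{k-1}(x)-P_{k+1}(-1)+P_{k-1}(-1)\}/(2k+1)$. The boundary terms cancel because $P_n(-1)=(-1)^n$ and $k\pm1$ have the same parity, which yields \eqref{eq:projcard}. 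I would cite the Legendre derivative identity as standard, or derive it from Bonnet's recursion together with $(1-x^2)P_k'=k(P_{k-1}-xP_k)$ if a self-contained argument is wanted. That identity is the only non-routine step.

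\emph{Special cases.} For $k=1$, $P_2-P_0=\tfrac{3x^2-1}{2}-1=\tfrac32(x^2-1)$, and dividing by $2\cdot3$ gives $-(1-x^2)/4$. For $k=2$, $P_3-P_1=\tfrac{5x^3-3x}{2}-x=\tfrac52(x^3-x)$, and dividing by $2\cdot5$ gives $\gamma(x^3-x)/4$. Adding $(x+1)/2$ yields $\{\gamma x^3+(2-\gamma)x+2\}/4$.
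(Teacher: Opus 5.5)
Your proposal is correct and follows essentially the same route as the paper: the tangent--normal factorization gives the cosine density $h(t)/2$ on $[-1,1]$ for $d=2$, and you then integrate. The Poisson case uses the same substitution, written as an explicit antiderivative, and the cardioid case rests on the same identity $\int_{-1}^x P_k(t)\,\rd t=\{P_{k+1}(x)-P_{k-1}(x)\}/(2k+1)$, which you justify via $(2k+1)P_k=P_{k+1}'-P_{k-1}'$ and $P_n(-1)=(-1)^n$.
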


\begin{corollary}\label{cor:quantile}
For $d=2$ and $u,\gamma,r\in(0,1)$, set $q\defin2(1-2u)/\gamma$, $p\defin(2-\gamma)/\gamma$, and $\Delta\defin q^2/4+p^3/27>0$. Then
\begin{align}
  F_{\Pois}^{-1}(u)&=\{1+r^2-A(u)^{-2}\}/(2r),\qquad A(u)=2ru/(1-r^2)+(1+r)^{-1},\nonumber\\
  F_{\Card_1}^{-1}(u)&=[\{(1-\gamma)^2+4\gamma u\}^{1/2}-1]/\gamma,\qquad F_{\Card_2}^{-1}(u)=\left\{-q/2+\Delta^{1/2}\right\}^{1/3}-\left\{q/2+\Delta^{1/2}\right\}^{1/3}.\label{eq:Q2}
\end{align}
\end{corollary}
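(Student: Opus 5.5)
The plan is to invert each cdf of Proposition~\ref{prop:proj} by hand. In every case $F$ is a continuous, strictly increasing bijection of $[-1,1]$ onto $[0,1]$. The Poisson kernel case because the bracket in \eqref{eq:projP} is increasing in $x$; the cardioids because their densities $\{1+\gamma\tilde C_k^{1/2}\}/\omega_2$ are positive for $|\gamma|<1$. So for $u\in(0,1)$ it is enough to solve $F(x)=u$ and pick the root lying in $(-1,1)$. Checking $F(\pm1)=1/2\pm1/2$ fixes the branch in each case.

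\emph{Poisson kernel.} Set $F_{\Pois}(x)=u$ in \eqref{eq:projP}. Multiplying by $2r/(1-r^2)$ and adding $(1+r)^{-1}$ gives $(1+r^2-2rx)^{-1/2}=A(u)$. Squaring and solving the resulting linear equation in $x$ yields $x=\{1+r^2-A(u)^{-2}\}/(2r)$. Since $A(u)>0$, the square-root step is reversible, so no spurious root appears.

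\emph{Cardioid of order 1.} From $F_{\Card_1}(x)=u$ we get the quadratic $\gamma x^2+2x+(1-\gamma-4u)=0$. The roots are $x=[-1\pm\{1-\gamma(1-\gamma-4u)\}^{1/2}]/\gamma$. The discriminant simplifies to $1-\gamma+\gamma^2+4\gamma u$. I need to reconcile this with the stated $(1-\gamma)^2+4\gamma u=1-2\gamma+\gamma^2+4\gamma u$, so I would recheck the expansion of $F_{\Card_1}$: writing $(x+1)/2-\gamma(1-x^2)/4=u$ and multiplying by $4$ gives $\gamma x^2+2x+2-\gamma-4u=0$. The discriminant over $4$ is then $1-\gamma(2-\gamma-4u)=(1-\gamma)^2+4\gamma u$, in agreement with the statement. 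The $+$ root is the one in $[-1,1]$: at $u=0$ it equals $(|1-\gamma|-1)/\gamma=-1$, and at $u=1$ it equals $1$.

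\emph{Cardioid of order 2.} $F_{\Card_2}(x)=u$ gives $\gamma x^3+(2-\gamma)x+2-4u=0$. Dividing by $\gamma$ puts it in the depressed form $x^3+px+q=0$, with $p=(2-\gamma)/\gamma>0$ and $q=2(1-2u)/\gamma$. Because $p>0$, we have $\Delta=q^2/4+p^3/27>0$, so there is exactly one real root, which must be the quantile. Cardano's formula gives this root as $\{-q/2+\Delta^{1/2}\}^{1/3}+\{-q/2-\Delta^{1/2}\}^{1/3}$ with real cube roots. Rewriting the second term as $-\{q/2+\Delta^{1/2}\}^{1/3}$ gives \eqref{eq:Q2}.

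The main obstacle is mild bookkeeping rather than any conceptual step. It consists of the sign conventions for real cube roots, including negative arguments, and the branch choice in the quadratic. I would confirm both by the endpoint checks at $u\to0,1$ and by monotonicity.
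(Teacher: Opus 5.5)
Your proposal is correct and follows the same route as the paper's proof. You invert \eqref{eq:projP} directly, solve the quadratic for the order-$1$ cardioid, and apply Cardano's formula to the depressed cubic with $p>0$. Your monotonicity and endpoint checks make explicit the branch selection the paper leaves implicit.

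In a final write-up, delete your first discriminant, which used the wrong constant term $1-\gamma-4u$; your corrected version with $2-\gamma-4u$ is right. Your cube-root sign worry is also unnecessary: $p>0$ gives $\Delta^{1/2}>|q|/2$, so both cube-root arguments in \eqref{eq:Q2} are strictly positive.
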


\section{Transported von Mises--Fisher distributions}
\label{sec:tvmf}

\subsection{M\"obius--von Mises--Fisher}
\label{sec:mvmf}

Take the von Mises--Fisher $\vMF(\bmu_1,\kappa)$, with density $\bx\mapsto c_d^{\vMF}(\kappa)\exp(\kappa\,\bmu_1^\top\bx)$ and constant $c_d^{\vMF}(\kappa)\defin\kappa^{(d-1)/2}/\{(2\pi)^{(d+1)/2}I_{(d-1)/2}(\kappa)\}$ ($I_\nu$ the modified Bessel function of the first kind) for $\kappa>0$, with $c_d^{\vMF}(0)=1/\omega_d$. By Theorem~\ref{thm:transport}, $\bY=\mathcal{M}_G(\bX)$ with $\bX\sim\vMF(\bmu_1,\kappa)$ and a target $G$ about $\bmu_2$ has density
\begin{align}
  f_{\bY}(\by)=c_d^{\vMF}(\kappa)\,h_G(\by^\top\bmu_2)\,\exp\!\big\{\kappa\,\bmu_1^\top\mathcal{M}_G^{-1}(\by)\big\}.\label{eq:genvmf}
\end{align}
On $\Sp^2$, $\bmu_1^\top\mathcal{M}_G^{-1}(\by)=t\,\bmu_1^\top\bmu_2+\{(1-t^2)/(1-s^2)\}^{1/2}\,\bmu_1^\top(\by-s\bmu_2)$ with $s=\by^\top\bmu_2$ and $t=2F_G(s)-1$, so \eqref{eq:genvmf} is in closed form whenever $F_G$ is. At $s=\pm1$ the expression is interpreted continuously as $t\,\bmu_1^\top\bmu_2$.

The spherical Cauchy target yields the flagship member: with $\bphi=\rho\bmu_2$ one has $\mathcal{M}_G^{-1}=M_{-\bphi}$ and $h_G$ the Cauchy factor, giving the \emph{M\"obius--von Mises--Fisher} distribution $\Mob(\bmu_1,\kappa,\bphi)$, the law of $\bY=M_{\bphi}(\bX)$, with density
\begin{align}
  f_{\bY}(\by)=c_d^{\vMF}(\kappa)\exp\!\big\{\kappa\,\bmu_1^\top M_{-\bphi}(\by)\big\}
  \left(\frac{1-\rho^2}{1+\rho^2-2\rho\,\by^\top\bmu_2}\right)^{\!d}.\label{eq:mvmf}
\end{align}
This distribution interpolates familiar models: $\rho=0$ gives $\vMF(\bmu_1,\kappa)$ and $\kappa=0$ gives $\Cau(\bmu_2,\rho)$; $\bmu_2=\pm\bmu_1$ gives a rotationally symmetric law. It is anisotropic when $\kappa\rho\neq0$ and $\bmu_1\neq \pm\bmu_2$. Simulation is immediate, by drawing $\bX\sim\vMF(\bmu_1,\kappa)$ and setting $\bY=M_{\bphi}(\bX)$, and the log-likelihood is in closed form. For $d=1$, \eqref{eq:mvmf} reduces to the density of \citet{Kato2010a}. On $\Sp^2$ it has six parameters against five for Kent, whose normalizing constant, unlike \eqref{eq:mvmf}, is not fully~explicit.

Other targets are equally tractable. On $\Sp^2$, the order-$k$ cardioid target gives \eqref{eq:genvmf} with $h_G(s)=1+\gamma P_k(s)$ and the Poisson kernel target with $h_G(s)=(1-r^2)/(1+r^2-2rs)^{3/2}$, in each case $t=2F_G(s)-1$ with $F_G$ from Proposition~\ref{prop:proj}. Both reduce to $\vMF(\bmu_1,\kappa)$ when the target becomes uniform ($\gamma=0$ or $r=0$) and to the target about $\bmu_2$ when $\kappa=0$.

\subsection{Isotropic scaled von Mises--Fisher}
\label{sec:isvmf}

The \emph{scaled von Mises--Fisher} distribution of \citet{Scealy2019} is the law of $\boldsymbol{A}\bX/\|\boldsymbol{A}\bX\|$ for $\bX\sim\vMF(\bmu_1,\kappa)$ and a positive-definite scaling $\boldsymbol{A}$. Under isotropic scaling about $\bmu_2$, this normalized scaling is a cosine rearrangement; its uniform image is the \emph{angular central Gaussian} target of \mbox{\citet{Tyler1987}}.

\begin{proposition}\label{prop:svmf}
Let $\boldsymbol{A}=\bmu_2\bmu_2^\top+a(\bI_{d+1}-\bmu_2\bmu_2^\top)$ and $S_a(\bx)=\boldsymbol{A}\bx/\|\boldsymbol{A}\bx\|$ for $a>0$, and let $G_a$ be the law of $S_a(\bU)$ for $\bU\sim\mathrm{Unif}(\Sp^d)$. Then $G_a$ is rotationally symmetric about $\bmu_2$, with density
\begin{align}
  g_{G_a}(\bx)=\frac{a}{\omega_d}\,\big\{1-(1-a^2)(\bx^\top\bmu_2)^2\big\}^{-(d+1)/2}.\label{eq:acg}
\end{align}
Its cosine rearrangement $\tau_a\defin F_{G_a}^{-1}\circ F_0$ is
$\tau_a(t)=t/\{a^2+(1-a^2)t^2\}^{1/2}$, with $\tau_a^{-1}=\tau_{1/a}$, and $\mathcal{M}_{G_a}=S_a$. If $\bmu_1=\bmu_2=\bmu$, $\bX\sim\vMF(\bmu,\kappa)$ and $\bY=S_a(\bX)$, then $\bY$ has density
\begin{align}
  \by\mapsto c_d^{\vMF}(\kappa)\,a\,\big\{1-(1-a^2)(\by^\top\bmu)^2\big\}^{-(d+1)/2}
  \exp\!\bigg\{\frac{\kappa\,a\,\by^\top\bmu}{\{1-(1-a^2)(\by^\top\bmu)^2\}^{1/2}}\bigg\}.\label{eq:svmf}
\end{align}
\end{proposition}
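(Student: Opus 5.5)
The plan is to show directly that $S_a$ has the cosine-rearrangement form of a generalized M\"obius transformation, and then read off everything else from Theorem~\ref{thm:transport}.

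\textbf{Step 1: $S_a$ moves only the cosine.} Write $\bx=t\bmu_2+(1-t^2)^{1/2}\bB_{\bmu_2}\bxi$. Then
\[
\boldsymbol{A}\bx=t\bmu_2+a(1-t^2)^{1/2}\bB_{\bmu_2}\bxi,
\qquad
\|\boldsymbol{A}\bx\|^2=t^2+a^2(1-t^2)=a^2+(1-a^2)t^2.
\]
Hence $S_a(\bx)=\tau_a(t)\bmu_2+\{1-\tau_a(t)^2\}^{1/2}\bB_{\bmu_2}\bxi$, with $\tau_a(t)=t/\{a^2+(1-a^2)t^2\}^{1/2}$. To check the tangent coefficient, note that $1-\tau_a^2=a^2(1-t^2)/\{a^2+(1-a^2)t^2\}$, so $\{1-\tau_a(t)^2\}^{1/2}$ equals $a(1-t^2)^{1/2}/\|\boldsymbol{A}\bx\|$, as required.

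So $S_a$ fixes the tangent direction $\bxi$ and changes only the cosine, by $\tau_a$. Moreover $\tau_a$ is a continuous strictly increasing bijection of $[-1,1]$, since $\tau_a'(t)=a^2/\{a^2+(1-a^2)t^2\}^{3/2}>0$. Inverting $s=\tau_a(t)$ gives $t^2=a^2s^2/\{1-(1-a^2)s^2\}$, and a short check shows this is $\tau_{1/a}(s)^2$. Thus $\tau_a^{-1}=\tau_{1/a}$.

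\textbf{Step 2: rotational symmetry and the rearrangement.} Since $\bU$ is uniform, its tangent component $\bxi$ is uniform on $\Sp^{d-1}$, independent of $t=\bU^\top\bmu_2\sim\nu_0$. The image $S_a(\bU)$ keeps $\bxi$ and has cosine $\tau_a(t)$, so $G_a$ is rotationally symmetric about $\bmu_2$.

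Its projected cdf is $F_{G_a}=F_0\circ\tau_a^{-1}$, which is continuous and strictly increasing. Therefore $F_{G_a}^{-1}\circ F_0=\tau_a$, and comparing with the definition of $\mathcal{M}_{G}$ gives $\mathcal{M}_{G_a}=S_a$.

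\textbf{Step 3: the density \eqref{eq:acg}.} Apply Theorem~\ref{thm:transport} with the uniform base $f\equiv1/\omega_d$; it gives $g_{G_a}(\by)=h_{G_a}(\by^\top\bmu_2)/\omega_d$. I expect identifying $h_{G_a}$ to be the main obstacle: it is the density ratio of the cosine laws times the Jacobian factor.

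The projected uniform density is proportional to $(1-t^2)^{d/2-1}$. The cosine of $G_a$ at $s$ therefore has density proportional to
\[
\{1-\tau_a^{-1}(s)^2\}^{d/2-1}\,(\tau_a^{-1})'(s),
\]
and $h_{G_a}(s)$ is this divided by $(1-s^2)^{d/2-1}$, with the normalization of the projected uniform law cancelling in the ratio.

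With $D=1-(1-a^2)s^2$ and $u=\tau_{1/a}(s)$:
\begin{itemize}
\item $1-u^2=(1-s^2)/D$;
\item $(\tau_{1/a})'(s)=a\,D^{-3/2}$, from the derivative formula of Step~1 with $a\mapsto1/a$, after clearing powers of $a$.
\end{itemize}
The ratio is then $D^{-(d/2-1)}\,a\,D^{-3/2}=a\,D^{-(d+1)/2}$, which is \eqref{eq:acg}. As a cross-check I would instead compute directly: the angular central Gaussian density $\det(\boldsymbol{\Sigma})^{-1/2}(\bx^\top\boldsymbol{\Sigma}^{-1}\bx)^{-(d+1)/2}/\omega_d$ with $\boldsymbol{\Sigma}=\boldsymbol{A}^2$ gives $\det(\boldsymbol{A})^{-1}=a^{-d}$ and $\bx^\top\boldsymbol{A}^{-2}\bx=s^2+(1-s^2)/a^2=a^{-2}D$. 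The powers of $a$ combine to $a$, so the two computations agree.

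\textbf{Step 4: the transported vMF density \eqref{eq:svmf}.} Take $\bmu_1=\bmu_2=\bmu$. Then
\[
\bmu^\top\mathcal{M}_{G_a}^{-1}(\by)=\tau_{1/a}(\by^\top\bmu)=\frac{a\,\by^\top\bmu}{D^{1/2}},
\]
again clearing powers of $a$. Substituting this and $h_{G_a}$ into \eqref{eq:genvmf} gives \eqref{eq:svmf}.
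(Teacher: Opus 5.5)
Your proof is correct and follows the paper's route: the tangent--normal decomposition yields $\tau_a$, hence $\mathcal{M}_{G_a}=S_a$, and \eqref{eq:svmf} follows by substitution into \eqref{eq:genvmf}. The one difference is that you derive \eqref{eq:acg} yourself by the cosine change of variables, whereas the paper identifies $G_a$ as the angular central Gaussian law with parameter $\boldsymbol{A}^2$ and quotes its known density (which you use only as a cross-check); note also that invoking Theorem~\ref{thm:transport} with a uniform base in Step~3 adds nothing, since that theorem already presupposes $g_{G_a}=h_{G_a}/\omega_d$, so your Jacobian computation alone carries the step.
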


The rearrangement $\tau_a$ draws the cosine towards the poles for $a<1$ and towards the equator for $a>1$, the scaling counterpart of the M\"obius dilation $M_{-\rho}$. At $a=1$, \eqref{eq:svmf} reduces to $\vMF(\bmu,\kappa)$. The \emph{isotropic scaled von Mises--Fisher} distribution thus belongs to the transported von Mises--Fisher family, with $\bmu_1\neq\bmu_2$ giving an anisotropic two-axis variant whose density is \eqref{eq:genvmf} with $h_{G_a}(s)=a\{1-(1-a^2)s^2\}^{-(d+1)/2}$ and $\mathcal{M}_{G_a}^{-1}=S_{1/a}$. An anisotropic scaling with unequal eigenvalues instead mixes the tangent directions, so the general elliptical form of \citet{Scealy2019} is not a cosine~rearrangement.

\section{Applications}
\label{sec:applications}

\subsection{Paleomagnetic directions}
\label{sec:paleo}

Paleomagnetic directions are a classical anisotropic testbed on $\Sp^2$, where departures from rotational symmetry are of intrinsic interest \citep{Scealy2019}. We use the $n=239$ site-mean characteristic remanence directions of the Paleocene South Ardo section (Belluno Basin, southern Alps; \citealp{Dallanave2012}), retrieved through the R package \texttt{PmagDiR} \citep{Dallanave2024}. These directions have two nearly antipodal polarities recording geomagnetic reversals; we bring them to a common polarity by reversing the directions lying more than $90^\circ$ from the maximum-variance axis, using \texttt{PmagDiR::common\_DI()}, yielding $82$ normal and $157$ reversed directions whose modal means lie $3.2^\circ$ apart after reflection. We fit seven distributions by maximum likelihood: von Mises--Fisher, skew von Mises--Fisher \citep{Ley2017b}, scaled von Mises--Fisher in general and isotropic forms, tangent von Mises--Fisher \citep{Garcia-Portugues2020}, M\"obius--von Mises--Fisher, and Kent (Table~\ref{tab:fit}).

\begin{table}[h!]
\caption{\small Maximum likelihood fits to the $n=239$ paleomagnetic directions. The columns show the number of parameters $p$, maximized log-likelihood $\ell$, and AIC/BIC.}
\label{tab:fit}
\centering
\small
\begin{tabular}{lrrrr}
\toprule
Model & $p$ & $\ell$ & AIC & BIC\\
\midrule
Von Mises--Fisher & $3$ & $41.7$ & $-77.5$ & $-67.0$\\
Skew von Mises--Fisher & $5$ & $52.9$ & $-95.9$ & $-78.5$\\
Scaled von Mises--Fisher & $6$ & $61.3$ & $-110.6$ & $-89.7$\\
Isotropic scaled von Mises--Fisher & $4$ & $58.7$ & $-109.5$ & $-95.6$\\
Tangent von Mises--Fisher & $5$ & $57.9$ & $-105.8$ & $-88.5$\\
M\"obius--von Mises--Fisher & $6$ & $63.1$ & $-114.2$ & $-93.3$\\
Kent & $5$ & $45.8$ & $-81.5$ & $-64.1$\\
\bottomrule
\end{tabular}
\end{table}

The pooled cluster is skewed rather than merely elliptical, its dispersion drawn towards shallow inclinations. The von Mises--Fisher, Kent (ovalness $\hat\beta=2.064$), and the scaled and tangent von Mises--Fisher models capture concentration and elongation but not this asymmetry; even the skew von Mises--Fisher fits distinctly worse (Table~\ref{tab:fit}). The M\"obius--von Mises--Fisher captures the asymmetry, attaining the highest likelihood and best AIC ($\hat\rho=0.51$, $\hat\kappa=5.55$, axes $72^\circ$ apart; Figure~\ref{fig:paleo}). Only the parsimonious isotropic scaled von Mises--Fisher, itself a transport member (Proposition~\ref{prop:svmf}), is preferred by BIC.

\begin{figure}[h!]
\centering
\begin{subfigure}[t]{0.37\textwidth}
\centering
\includegraphics[width=\linewidth]{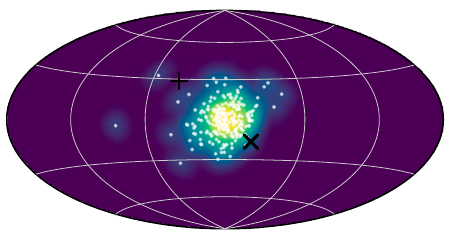}
\caption{Kernel density estimate.}
\label{fig:paleo-kde}
\end{subfigure}\hfill
\begin{subfigure}[t]{0.10\textwidth}
\centering
\includegraphics[height=92.99pt,trim=1.9bp 0 14.2bp 0,clip]{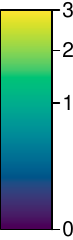}%
\end{subfigure}\hfill
\begin{subfigure}[t]{0.37\textwidth}
\centering
\includegraphics[width=\linewidth]{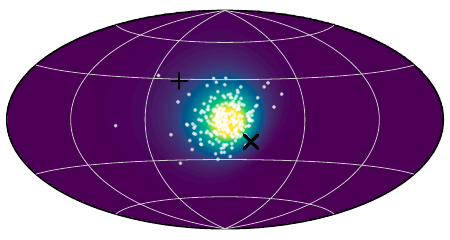}
\caption{Fitted density \eqref{eq:mvmf}.}
\label{fig:paleo-mvmf}
\end{subfigure}
\caption{\small Paleomagnetic directions (white points) in the Hammer projection centred on the sample mean, with fitted location axis $\hat\bmu_1$ ($+$) and skewness axis $\hat\bmu_2$ ($\times$). The kernel density estimate uses a rule-of-thumb bandwidth \citep{Garcia-Portugues2013a}. The panels share one square-root density scale and show the same skewed concentration.}
\label{fig:paleo}
\end{figure}

\subsection{Short-period comet orbits}
\label{sec:comets}

We analyse the $n=784$ non-fragment short-period comets (period below $200$ years) of \texttt{sphunif} \citep{Garcia-Portugues2020c}. Each orbit normal is $(\sin i\sin\Omega,-\sin i\cos\Omega,\cos i)^\top$, with inclination $i$ from the ecliptic normal and $\Omega$ the longitude of the ascending node. Unlike long-period comet orbit normals, close to uniform and well described by a spherical cardioid distribution \citep{Garcia-Portugues:cardioid}, these mostly Jupiter-family orbits are heavily clustered about the ecliptic normal and not rejected as rotationally symmetric \citep{Garcia-Portugues:Sobolev}. We fit four rotationally symmetric models: von Mises--Fisher, spherical Cauchy, Poisson kernel, and isotropic scaled von Mises--Fisher (Table~\ref{tab:comets}).

\pagebreak

The isotropic scaled von Mises--Fisher distribution fits the strongly concentrated, heavy-tailed normals best, by a wide margin in AIC and BIC, followed by the Poisson kernel, spherical Cauchy, and von Mises--Fisher models; its projected cdf is also the closest to the empirical one (Figure~\ref{fig:comets}).

\begin{table}[h!]
\caption{\small Maximum likelihood fits to the orbit normals of $n=784$ non-fragment short-period comets. Columns are as in Table~\ref{tab:fit}.}
\label{tab:comets}
\centering
\small
\begin{tabular}{lrrrr}
\toprule
Model & $p$ & $\ell$ & AIC & BIC\\
\midrule
Von Mises--Fisher & $3$ & $-759.0$ & $1524.0$ & $1538.0$\\
Spherical Cauchy & $3$ & $-210.0$ & $426.1$ & $440.1$\\
Poisson kernel & $3$ & $-203.0$ & $412.0$ & $426.0$\\
Isotropic scaled von Mises--Fisher & $4$ & $-162.3$ & $332.6$ & $351.2$\\
\bottomrule
\end{tabular}
\end{table}

\begin{figure}[h!]
\centering
\includegraphics[width=0.75\textwidth]{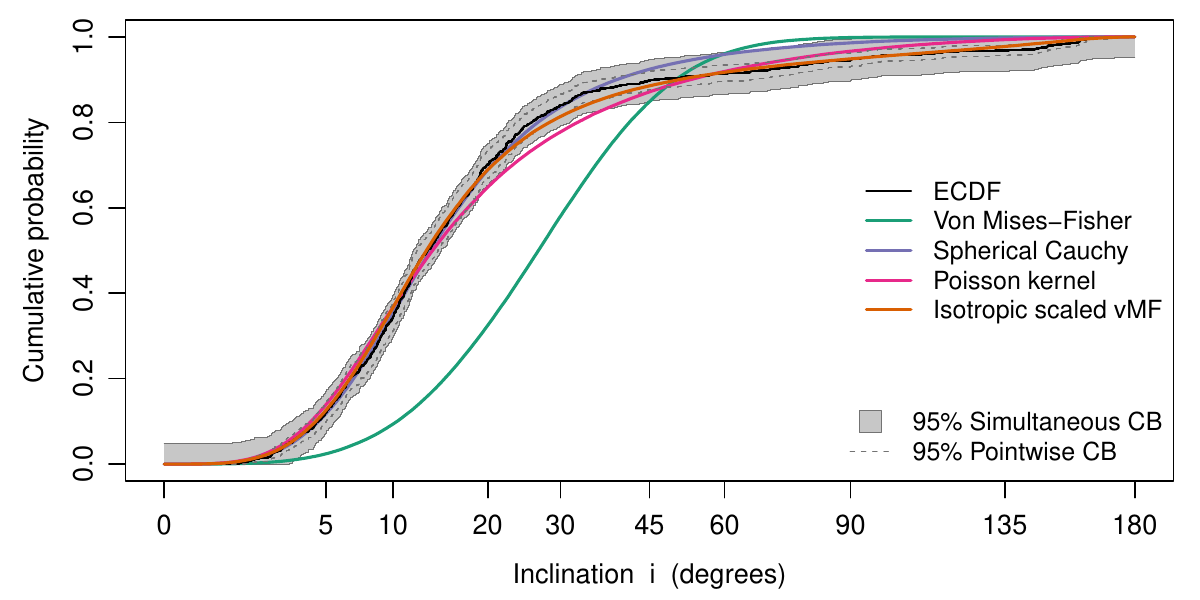}
\caption{\small Short-period comet orbit normals: empirical projected cdf and projected cdfs of the four fitted models, expressed in terms of the inclination $i$ and shown on a square-root axis. Shading is a simultaneous $95\pct$ Kolmogorov--Smirnov band; dashed grey curves are pointwise $95\pct$ bands.}
\label{fig:comets}
\end{figure}

\section*{Acknowledgments}

Financial support from grant PID2024-158399NB-I00, funded by MICIU/AEI/10.13039/501100011033 and ERDF/EU, is acknowledged.

\appendix

\section{Proofs}

\begin{proof}[Proof of Proposition~\ref{prop:cdf}]
The tangent--normal factorization gives the cosine density proportional to $s\mapsto \{(1-\rho^2)/(1+\rho^2-2\rho s)\}^d(1-s^2)^{d/2-1}$. The substitution $t=M_\rho(s)$ satisfies $\rd t=M_\rho'(s)\,\rd s$ with $M_\rho'(s)=\{(1-\rho^2)/(1+\rho^2-2\rho s)\}^2>0$, and $1-t^2=(1-s^2)M_\rho'(s)$; since $M_\rho(\pm1)=\pm1$, $M_\rho$ is an increasing bijection of $[-1,1]$, and $M_\rho^{-1}=M_{-\rho}$ by direct computation. The substitution turns the cosine density into the projected uniform density proportional to $t\mapsto (1-t^2)^{d/2-1}$, so $F_\rho(x)=F_0(M_\rho(x))$, and $F_0(y)=\mathrm{I}_{(1+y)/2}(d/2,d/2)$ gives \eqref{eq:cdf}.
\end{proof}

\begin{proof}[Proof of Proposition~\ref{prop:ot}]
By Proposition~\ref{prop:cdf}, $F_\rho=F_0\circ M_\rho$ with $F_0:[-1,1]\to[0,1]$ a continuous increasing bijection; composing with $F_0^{-1}$ gives \eqref{eq:rearrangement}. An increasing map between atomless laws on $\R$ is their monotone rearrangement, which for a cost $c(s,t)=h(s-t)$ with $h$ strictly convex is the unique optimal transport map \citep[Theorem~2.9]{Santambrogio2015}; for $h(s)=s^2$ this is the map of \citet{Brenier1991}. Interchanging the roles of $\nu_\rho$ and $\nu_0$ shows that $F_\rho^{-1}\circ F_0$, which equals $M_{-\rho}$ by \eqref{eq:rearrangement} and $M_\rho^{-1}=M_{-\rho}$, is the monotone rearrangement transporting $\nu_0$ to $\nu_\rho$.
\end{proof}

\begin{proof}[Proof of Corollary~\ref{cor:lift}]
Write the $\mathrm{Unif}(\Sp^d)$ and $\Cau(\bmu,\rho)$ laws in the tangent--normal decomposition: a law rotationally symmetric about $\bmu$ factorizes as its cosine marginal times an independent $\bxi\sim\mathrm{Unif}(\Sp^{d-1})$. Since $\Phi_\rho$ fixes $\bxi$ and maps the cosine by $F_{\rho}^{-1}\circ F_0$, it carries one factorization to the other, hence $\Phi_\rho(\bU)\sim\Cau(\bmu,\rho)$. For the second claim, stereographic projection from $-\bmu$ sends $t\bmu + (1-t^2)^{1/2}\,\bB_{\bmu}\bxi$ to $\by = r\bxi$ with $r = \{(1-t)/(1+t)\}^{1/2}$, preserving $\bxi$; the dilation $\by\mapsto c\by$ acts on the cosine through $(1-t')/(1+t') = c^2(1-t)/(1+t)$, and $c = (1-\rho)/(1+\rho)$ yields $t' = M_{-\rho}(t)$, so $\Phi_\rho$ is this dilation.
\end{proof}

\begin{proof}[Proof of Theorem~\ref{thm:transport}]
Let $P_0$ be the uniform law and $T_{\#}P\defin P\circ T^{-1}$ denote the pushforward of a probability measure $P$ under a measurable map $T$. Then $\rd P_{\bX}/\rd P_0=\omega_d f$ and $\mathcal{M}_{G\#}P_0=G$, so bijectivity gives $(\rd P_{\bY}/\rd G)(\by)=\omega_d f\{\mathcal{M}_G^{-1}(\by)\}$. Multiplying by the density of $G$ with respect to the surface measure $\sigma_d$, $\rd G/\rd\sigma_d=h_G(\by^\top\bmu)/\omega_d$, gives \eqref{eq:pushforward}.
\end{proof}

\begin{proof}[Proof of Proposition~\ref{prop:proj}]
From the tangent--normal factorization, if $\bX$ has the rotationally symmetric density $\bx\mapsto c\,g(\bx^\top\bmu)$ on $\Sp^d$, then $\bX^\top\bmu$ has density $t\mapsto c\,\omega_{d-1}\,g(t)(1-t^2)^{d/2-1}$ on $[-1,1]$. For $d=2$, the projected densities are $t\mapsto(1-r^2)(1+r^2-2rt)^{-3/2}/2$ and $t\mapsto\{1+\gamma P_k(t)\}/2$. Integrating the first via $u=1+r^2-2rt$ gives \eqref{eq:projP}; \eqref{eq:projcard} follows from $\int_{-1}^x P_k(t)\,\rd t=\{P_{k+1}(x)-P_{k-1}(x)\}/{(2k+1)}$.
\end{proof}

\begin{proof}[Proof of Corollary~\ref{cor:quantile}]
The Poisson and order-$1$ cardioid formulas solve \eqref{eq:projP} and the quadratic \eqref{eq:projcard}. For order $2$, $F_{\Card_2}(x)=u$ is the depressed cubic $x^3+px+q=0$; as $p>0$, Cardano's real root gives \eqref{eq:Q2}.
\end{proof}

\begin{proof}[Proof of Proposition~\ref{prop:svmf}]
Write $\bx=t\bmu_2+(1-t^2)^{1/2}\,\bB_{\bmu_2}\bxi$. Then $\boldsymbol{A}\bx=t\bmu_2+a(1-t^2)^{1/2}\,\bB_{\bmu_2}\bxi$ and $\|\boldsymbol{A}\bx\|=\{a^2+(1-a^2)t^2\}^{1/2}$. Hence, with $\tau_a(t)=t/\{a^2+(1-a^2)t^2\}^{1/2}$,
\begin{align*}
  S_a(\bx)=\tau_a(t)\bmu_2+\{1-\tau_a(t)^2\}^{1/2}\,\bB_{\bmu_2}\bxi,
\end{align*}
where $1-\tau_a(t)^2=a^2(1-t^2)\{a^2+(1-a^2)t^2\}^{-1}$. Thus $S_a$ rearranges the cosine by $\tau_a$ and fixes $\bxi$. Since $G_a$ is the law of $S_a(\bU)$ for a uniform $\bU$, the definition of $\mathcal{M}_{G_a}$ gives $\mathcal{M}_{G_a}=S_a$, and $G_a$ has the angular central Gaussian density \eqref{eq:acg} with parameter $\boldsymbol{A}^2$. Direct computation gives $\tau_a'(t)=a^2/\{a^2+(1-a^2)t^2\}^{3/2}>0$ and $\tau_a^{-1}=\tau_{1/a}$. Finally, \eqref{eq:svmf} is \eqref{eq:genvmf} with $\bmu_1=\bmu_2=\bmu$ and $h_{G_a}(s)=a\{1-(1-a^2)s^2\}^{-(d+1)/2}$, using $\bmu^\top\mathcal{M}_{G_a}^{-1}(\by)=\tau_a^{-1}(\by^\top\bmu)=a(\by^\top\bmu)\{1-(1-a^2)(\by^\top\bmu)^2\}^{-1/2}$.
\end{proof}


\fi

\end{document}